\def\@settitle{\begin{center}%
  \baselineskip14\p@\relax
  \bfseries
  \uppercasenonmath\@title
  \@title
  \ifx\@subtitle\@empty\else
     \\[1ex]\uppercasenonmath\@subtitle
     \footnotesize\mdseries\@subtitle
  \fi
  \end{center}%
}
\def\subtitle#1{\gdef\@subtitle{#1}}
\def\@subtitle{}
\newtheorem{theorem}{Theorem}
\newtheorem{problem}{Problem}
\DeclareMathOperator{\conv}{conv}
\begin{document}
\title[]{An extremal problem of regular simplices}
\subtitle{The higher-dimensional case}
\author[\'A. G.Horv\'ath]{\'Akos G.Horv\'ath}
\address {\'A. G.Horv\'ath \\ Department of Geometry \\ Mathematical Institute \\
Budapest University of Technology and Economics\\
H-1521 Budapest\\
Hungary}
\email{ghorvath@math.bme.hu}
\date{}

\subjclass[2010]{52A40, 52A38, 26B15, 52B11}
\keywords{convex hull, isometry, reflection at a hyperplane, simplex, volume inequality.}

\begin{abstract}
The new result of this paper connected with the following problem: Consider a supporting hyperplane of a regular simplex and its reflected image at this hyperplane. When will be the volume of the convex hull of these two simplices maximal? We prove that in the case when the dimension is less or equal to $4$, the maximal volume achieves in that case when the hyperplane goes through on a vertex and orthogonal to the height of the simplex at this vertex. More interesting that in the higher dimensional cases this position is not optimal. We also determine the optimal position of hyperplane in the $5$-dimensional case. This corrects an erroneous statement in my paper \cite{gho 1}.
\end{abstract}

\maketitle

\section{Introduction}

Assume that the intersecting simplices $S$ and $S_H$ are reflected copies of each other in the hyperplane $H$. Then $H$ intersects each of them in the same set. By the Main Lemma of paper \cite{gho 1} we have that the intersection of the simplices in an optimal case (when the volume of their common convex hull is maximal) is a common vertex. In the paper \cite{gho 1} we stated the following (see Theorem 3 in \cite{gho 1}):
\emph{If $S$ is  the regular simplex of dimension $n$, then $c(S,S^H):=\frac{1}{\mathrm{ Vol }_{n}(S)}\mathrm{ Vol }(\mathrm{ conv }(S,S^H))=2n$}.
The proof contained a wrong substitution implying partly-false result. We prove that this statement is true when $n\leq 4$ and it is false in the higher dimensional cases. Additionally, we solve the $5$-dimensional case and obtain that position of the simplex which gives maximal volume. Since this position cannot be generalized trivially for higher dimensions, we have opened the following problem:

\begin{problem}
Consider a supporting hyperplane $H$ of a regular $n$-simplex $S$ and denote by $S'_H$ the orthogonal projection of $S$ to $H$. Determine that supporting hyperplane $H$ for which the volume of $\conv\{S,S'_H\}$ is maximal.
\end{problem}

As we mentioned Theorem \ref{thm:nlessorequaltofive} gives the answer in the cases when $n\leq 5$. We may mention here a very similar problem solved by P. Filliman. He investigated in \cite{filliman 1}, \cite{filliman 2} the volume of the projection of a body using the in the exterior algebra method. He determined those supporting hyperplanes $H$ of the regular simplex $S$ for which the volume of $S'_H$ is maximal or minimal, respectively. Similar interesting questions can be found in the paper \cite{gho-langi} on the volume of the union of two convex body and also in the survey paper \cite{gho-surveyonconvhull}.

\section{Regular simplex}

To avoid the confusion in our paper, we give a sort of easy calculations corresponding to the $n$-dimensional regular simplex.
Let us denote the vertices of the regular simplex of dimension $n$ by $\frac{1}{\sqrt{2}}e_i$, where $\{e_0,e_1,\ldots ,e_n\}$ is an orthonormed basis of an $(n+1)$-dimensional Euclidean space. Set $s_i:=\frac{1}{\sqrt{2}}(e_i-e_0)$ for $i=0,\ldots ,n$ the system of the vertices with respect to the $n$-dimensional hyperplane $\overline{H}:=\left\{\sum\limits_{i=0}^nx_ie_i \, : \, \sum\limits_{i=0}^nx_i=\frac{1}{\sqrt{2}}\right\}$. Then we have
$$
s=\frac{1}{\sqrt{2}}\sum\limits_{i=1}^ns_i=\frac{1}{\sqrt{2}}
\left(
\begin{array}{c}
-n \\
1\\
\vdots \\
1
\end{array}\right), \,
s_{2,k}=\frac{1}{\sqrt{2}}\sum\limits_{i=1}^{k-1}s_i=\frac{1}{\sqrt{2}}
\left(
\begin{array}{c}
-(k-1) \\
1\\
\vdots \\
1 \\
0 \\
\vdots \\
0
\end{array}\right)$$
$$
 s_{2,k}^0=\frac{ s_{2,k}}{\| s_{2,k}\|}, \, c=\frac{1}{n+1}s
$$
$$
\|s\|=\sqrt{\frac{n(n+1)}{2}}, \, \|s_{2,k}\|=\sqrt\frac{(k-1)k}{2},\, h=\frac{1}{n}\|s\|=\sqrt{\frac{n+1}{2n}}, \, \|c\|=\sqrt{\frac{n}{2(n+1)}}
$$
$$
u_0=\frac{s}{\|s\|}=\frac{1}{\sqrt{n(n+1)}}\left(
\begin{array}{c}
-n \\
1\\
\vdots \\
1
\end{array}\right),\,
u_i=\frac{c-s_i}{\|c-s_i\|}=\left(
\begin{array}{c}
1\\
\vdots \\
1\\
-n\\
1\\
\vdots \\
1
\end{array}\right)
$$
$$
\langle s_{2,k},u_0\rangle=\frac{(k-1)(n+1)}{\sqrt{2n(n+1)}} \, ,\, \langle u_i,u_j\rangle=-\frac{1}{n}
$$
$$
s-s_j=\left(
\begin{array}{c}
-n+1\\
1 \\
\vdots \\
1\\
0\\
1\\
\vdots \\
1
\end{array}\right), \,
s-(n+1)s_j=\left(
\begin{array}{c}
1\\
\vdots \\
1\\
-n\\
1\\
\vdots \\
1
\end{array}\right)=\sqrt{\frac{n(n+1)}{2}}u_j.
$$
Assume that the $n-1$-dimensional flat $H$ of the hyperplane $\overline{H}$ through the vertex $s_0$ contains precisely the vertices $s_0, \ldots s_r$ and parallel to the affine hull of the remaining ones. Then the unit normal vector of $H$ directed to the interior of that half-space $H^+$ which contains the simplex is
$$
u=\frac{1}{\sqrt{(n-r)(r+1)(n+1)}}\left(\sum\limits_{i=0}^r-(n-r)e_i+\sum\limits_{i=r+1}^{n+1}(r+1)e_i\right)
$$
$$
=\frac{1}{\sqrt{(n-r)(r+1)(n+1)}}\left(
\begin{array}{c}
-(n-r)\\
\vdots \\
-(n-r)\\
(r+1)\\
\vdots \\
r+1
\end{array}\right)
$$
and in this case we have
$$
\langle u, u_0\rangle= \sqrt{\frac{n-r}{r+1}}\frac{1}{\sqrt{n}}.
$$
Observe that $s_0$ always lies in $H$ while $s_n$ does not lie in it. Hence holds the inequality $0\leq r\leq n-1$.

\section{The theorem}

In this section we prove the following theorem:

\begin{theorem}\label{thm:nlessorequaltofive}
If $S$ is  the regular simplex of dimension $n\leq 4$, then
$$
c(S,S^H):=\frac{1}{\mathrm{ Vol }_{n}(S)}\mathrm{ Vol }(\mathrm{ conv }(S,S^H))=2n,
$$
attained only in the case when $u=u_0=\frac{s}{\|s\|}$. For $n=5$ we have
$$
c(S,S^H)=10\left(\frac{1}{2}+\frac{\sqrt{77}}{10\sqrt{3}}\right)\approx 10.06623 >10=2n.
$$
\end{theorem}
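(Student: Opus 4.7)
By the Main Lemma of \cite{gho 1}, it suffices to consider supporting hyperplanes $H$ of $S$ through a single common vertex, which I fix as $s_0$. Each such $H$ is determined by its unit inward normal $u$, ranging over the closed spherical simplex $\Delta$ of unit vectors in the linear subspace parallel to $\overline{H}$ that satisfy $\langle u, s_i - s_0\rangle \ge 0$ for all $i=1,\dots,n$. Set $c(u) := \vol(\conv(S, S^H))/\vol(S)$. This is continuous on $\Delta$ and invariant under the symmetric group permuting $\{s_1, \dots, s_n\}$ and fixing $s_0$; its unique interior fixed point is $u_0$.

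The first key observation is that $\conv(S, S^H)$ is symmetric under reflection across $H$ and its cross-section with $H$ equals the orthogonal projection $P = \mathrm{proj}_H(S)$; hence
\begin{equation*}
\vol(\conv(S,S^H)) = 2\vol(\conv(S,P)).
\end{equation*}
In the special case $u = u_0$, the hyperplane $H$ is orthogonal to the height from $s_0$, the projection $P$ is a translate of the opposite face with centroid at $s_0$, and $\conv(S,P)$ is the prism over $P$ of height $h = \sqrt{(n+1)/(2n)}$. Its volume equals $n\vol(S)$, so $c(u_0) = 2n$.

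For general $u$ I would triangulate $\conv(S,P)$ using the vertices $\{s_0, \dots, s_n\}\cup\{\mathrm{proj}_H(s_i):1\le i\le n\}$ and sum the resulting simplex volumes to obtain an explicit formula for $c(u)$. A Taylor expansion around $u_0$ yields $c(u_0 + \varepsilon v) = 2n + \varepsilon^2 Q_n(v) + O(\varepsilon^3)$, where $Q_n$ is a quadratic form on the tangent space at $u_0$ invariant under the stabilizer of $u_0$. Decomposing the tangent space into irreducibles reduces $Q_n$ to a single scalar on each isotypic component, each an explicit rational function of $n$. The crucial step is verifying that every such scalar is strictly negative for $n \le 4$, making $u_0$ a strict local maximum; together with a check that $c < 2n$ on the boundary of $\Delta$ (where $H$ contains an additional vertex of $S$ and the problem reduces inductively to a lower-dimensional configuration), this gives the global maximum claim for $n \le 4$.

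For $n = 5$ one eigenvalue of $Q_5$ turns positive, so $u_0$ becomes a saddle; the unstable direction determines, up to the symmetry, a one-parameter curve in $\Delta$ along which $c$ exceeds $10$, and optimizing the resulting one-variable rational function yields the stated value $10\bigl(\tfrac{1}{2}+\tfrac{\sqrt{77}}{10\sqrt{3}}\bigr)$. The principal obstacle throughout is obtaining a usable closed-form expression for $c(u)$: the combinatorial type of $\conv(S,P)$ changes across walls of $\Delta$, so one must either triangulate each chamber separately or argue that a single triangulation is valid in a neighborhood of $u_0$ and extend by continuity to the remainder of the cell.
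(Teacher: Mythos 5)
Your plan contains a structural error that makes the central mechanism fail: the transition at $n=5$ is \emph{not} visible in the Hessian of $c$ at $u_0$. In a neighbourhood of $u=u_0$ the only ``upper'' facet of $S$ (the only facet whose outward normal has positive inner product with $u$) is the facet opposite $s_0$, since $\langle u_i,u_0\rangle=-\tfrac1n<0$ for $i\neq 0$. On that chamber one has exactly $c(u)=2n\,\langle u_0,u\rangle^2$, which has a strict local maximum at $u_0$ for \emph{every} $n$; the quadratic form $Q_n$ you propose to diagonalize is negative definite for all $n$, and no eigenvalue turns positive at $n=5$. The competitor that beats $2n$ in dimension $5$ lives in a different chamber of $\Delta$ (two upper facets besides the top one, $k=2$ in the paper's notation), at a point far from $u_0$: geometrically, one starts from the position where an edge $s_0s_1$ lies in $H$ and rotates by an angle $\varphi_{\max}$ with $\cos^2\varphi_{\max}=\tfrac12\bigl(1+\sqrt{27/77}\bigr)$. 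So the ``unstable direction at $u_0$'' does not exist and cannot parametrize the curve on which you propose to optimize. Separately, even granting your local picture, a strict local maximum at $u_0$ together with $c<2n$ on $\partial\Delta$ does not yield a global maximum --- you must exclude other interior local maxima, and that is precisely where all the work lies; your plan also defers the closed-form expression for $c(u)$ and all eigenvalue computations, which are the substance of the proof.

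For comparison, the paper's argument is global on each chamber rather than perturbative at $u_0$. It decomposes $\conv(S,S^H)$ into prisms over the orthogonal projections of the upper facets (Statement 2 of \cite{gho 1}), giving
$$
\frac{\mathrm{Vol}(\conv(S,S^H))}{\mathrm{Vol}_n(S)}
=2n\sum_{l=1}^{k}\frac{\langle u_{i_l},u\rangle\,\langle u,s-s_{i_l}\rangle}{|\langle u_{i_l},(n+1)s_{i_l}-s\rangle|},
$$
a quadratic expression in the quantities $\langle s_{i_l},u\rangle$ and $x=\langle u_0,u\rangle$. Using the constraints $\langle s-(n+1)s_{i_l},u\rangle\ge 0$ (each $F_{i_l}$ is upper) and a spherical triangle inequality relating $x$ to $\langle s_{2,k}^0,u\rangle$, it bounds the bracket by $x\bigl[Ax+\sqrt{B}\sqrt{1-x^2}\bigr]$ and reduces the claim to the positivity of a quadratic in $x^2$, settled by a discriminant condition checked for each admissible pair $(n,k)$; the exceptional pairs ($n=4,k=2$ and $n=5,k=2,3$) are handled by a direct convexity/concavity analysis, and for $n=5$, $k=2$ the bound is attained, producing the value $10\bigl(\tfrac12+\tfrac{\sqrt{77}}{10\sqrt3}\bigr)$. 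If you wish to salvage your approach, you would need to carry out a chamber-by-chamber global analysis of this kind rather than a local expansion at $u_0$.
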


\begin{proof}
Without loss of generality we can assume that $\|s_1\|=\ldots =\|s_n\|=1$ as in the previous paragraph. We imagine that $H$ is horizontal and $H^+$ is the upper half-space. Define the \emph{upper side of $S$} as the collection of those facets in which a ray orthogonal to $H$ and terminated in a far point of $H^+$ is first intersecting $S$.  The volume of the convex hull is twice the sum of the volumes of those prisms which are based on the orthogonal projection of a facet of the simplex of the upper side. Let denote $F_{i_1},\cdots, F_{i_k}$ the simplex of the upper side, $F'_{i_1},\cdots, F'_{i_k}$ its orthogonal projections on $H$ and $u_{i_1},\cdots, u_{i_k}$ its respective unit normals, directed outwardly.
Observe that in this case
$$
u_{i_l}= \left\{
\begin{array}{lcc}
\frac{(n+1)s_{i_l}-s}{\| (n+1)s_{i_l}-s\|} & \mbox{if} & i_l\neq 0 \\
\frac{ s}{\| s\|} & \mbox{if} & i_l=0,
\end{array}
\right.
$$
moreover
$$
\| (n+1)s_{i_l}-s\|=|\langle u_{i_l}, (n+1)s_{i_l}-s\rangle|=|(n+1)\langle u_{i_l}, s_{i_l}\rangle-\langle u_{i_l}, s_{i_l}\rangle|=\sqrt{\frac{n(n+1)}{2}}\|s_1\|.
$$
We also introduced the notation $s=\sum\limits_{i=0}^{n}s_i=\sum\limits_{i=1}^{n}s_i$. Since $i_1=0$ corresponds to an upper facet, using Statement 2 in \cite{gho 1} we get
$$
\frac{\mathrm{ Vol }(\mathrm{ conv }(S,S^H))}{\mathrm{ Vol }_{n}(S)}=2n\sum\limits_{l=1}^k\frac{\langle u_{i_l}, u\rangle \langle u, s-s_{i_l}\rangle}{|\langle u_{i_l}, (n+1)s_{i_l}-s\rangle |}
$$
$$
=2n\left(\langle u_0, u\rangle ^2+ \frac{2}{(n+1)n}\sum\limits_{l=2}^k\langle  -(n+1)s_{i_l}+s, u \rangle\langle u, s-s_{i_l}\rangle\right)
$$
$$
=2n\left(\langle u_0, u\rangle ^2+ \frac{2}{(n+1)n}\sum\limits_{l=2}^k\left(-(n+1)\langle s_{i_l},u\rangle+\langle s, u \rangle\right)\left(\langle u, s\rangle-\langle u,s_{i_l}\rangle\right)\right)
$$
$$
=2n\left(\langle u_0, u\rangle ^2+ \frac{2}{(n+1)n}\sum\limits_{l=2}^k\left((n+1)\left\langle s_{i_l},u\right\rangle^2-(n+2)\langle s, u \rangle\left\langle u, s_{i_l}\right\rangle+\langle u,s\rangle^2\right)\right).
$$

If the only upper facet corresponds to the normal vector $u_0$, then only the first term occurs -- meaning that $k=1$ --- and the maximal value of the right hand side is less than or equal to $2n$ with equality in the required case.

Assume now that $k\geq 2$.
By the regularity of the simplex we have that $s=\sqrt{\frac{(n+1)n}{2}}u_0$, hence we get
$$
\frac{\mathrm{ Vol }(\mathrm{ conv }(S,S^H))}{\mathrm{ Vol }_{n}(S)}:=2nf\left(\left\langle s_{2,k}^0,u\right\rangle,\langle u_0, u \rangle\right)=
$$
$$
=2n\left(\langle u_0, u\rangle ^2+\sum\limits_{l=2}^k\left(\frac{2}{n}\left\langle s_{i_l},u\right\rangle^2-\sqrt{\frac{2}{n(n+1)}}(n+2)\langle u_0, u \rangle\left\langle  s_{i_l},u\right\rangle+\langle u_0,u\rangle^2\right)\right)
$$
$$
= 2n\left(\frac{2}{n}\sum\limits_{l=2}^k\left\langle s_{i_l},u\right\rangle^2-\sqrt{\frac{2}{n(n+1)}}(n+2)\langle u_0, u \rangle\left\langle  \sum\limits_{l=2}^k s_{i_l},u\right\rangle+k\langle u_0,u\rangle^2\right)
$$

Set  $s_{2,k}:=\sum\limits_{l=2}^ks_{i_l}$ and  $s_{2,k}^0:=\frac{s_{2,k}}{\|s_{2,k}\|}$, respectively.

Denote by $f$ the expression in the bracket then
$$
2nf:=2n\left(\frac{2}{n}\sum\limits_{l=2}^k\left\langle s_{i_l},u\right\rangle^2 -\sqrt{\frac{(k-1)k}{n(n+1)}}(n+2)\langle u_0, u \rangle\left\langle  s_{2,k}^0,u\right\rangle+k\langle u_0,u\rangle^2\right).
$$
First we remark that the inequalities $\frac{1}{n}\leq \langle u_0,u\rangle\leq 1$ are fulfilled.
We can observe that if a vertex $s_{i_l}$ gives an upper facet then
$$
\left\langle \sum\limits_{i\neq i_l}\left(s_i-s_{i_l}\right), u\right\rangle\geq 0,
$$
implying that
$$
\left\langle s-(n+1)s_{i_l}, u\right\rangle\geq 0.
$$
From this we get a new connection between the parameters $\left\langle s_{i_l},u\right\rangle$ and $\langle u_0,u\rangle$, namely
$$
\left\langle s_{i_l},u\right\rangle\leq \frac{\|s\|}{(n+1)}\langle u_0,u\rangle=\sqrt{\frac{n}{2(n+1)}}\langle u_0,u\rangle.
$$
This implies that
$$
\frac{2}{n}\sum\limits_{l=2}^k\left\langle s_{i_l},u\right\rangle^2\leq \frac{(k-1)}{(n+1)}\langle u_0,u\rangle^2.
$$
On the other hand, if we write that
$$
\langle u_0,u\rangle:=\cos \alpha  \mbox{, } \langle s_{2,k}^0,u_0\rangle :=\cos \beta \mbox{ and } \langle s_{2,k}^0,u\rangle:=\cos \gamma,
$$
then we get that $\gamma \leq \alpha + \beta$, and so $\cos \alpha \cos \beta-\sin\alpha \sin\beta \leq \cos \gamma$.
But
$$
\cos\beta =\frac{(k-1)+\frac{1}{2}(k-1)(n-1)}{\sqrt{\frac{(k-1)kn(n+1)}{4}}}=\sqrt{1-\frac{n-k+1}{nk}} \mbox{ and } \sin\beta=\sqrt{\frac{n-k+1}{nk}},
$$
hence we have a second inequality which is:
$$
\langle u_0,u\rangle\sqrt{\frac{(n+1)(k-1)}{nk}}-\sqrt{1-\langle u_0,u\rangle^2}\sqrt{\frac{n-k+1}{nk}}\leq  \langle s_{2,k}^0,u\rangle=\sqrt{\frac{2}{k(k-1)}}\sum\limits_{l=2}^k\left\langle s_{i_l},u\right\rangle.
$$
Introduce the notation $x:=\langle u_0,u\rangle$. Now we get that
$$
f(x)\leq
$$
$$
\left(\left( \frac{k-1}{n+1}+k\right)- \frac{(k-1)(n+2)}{n}\right)x^2+
\frac{(n+2)}{n}\sqrt{\frac{(k-1)(n-k+1)}{n+1}}x\sqrt{1-x^2}=
$$
$$
x\left[\left(1-(k-1)(n+2)\left(\frac{1}{n}-\frac{1}{n+1}\right)\right)x+\frac{(n+2)}{n}\sqrt{\frac{(k-1)(n-k+1)}{n+1}}\sqrt{1-x^2}\right]=
$$
$$
x\left[\left(1-\frac{(k-1)(n+2)}{n(n+1)}\right)x+\frac{(n+2)}{n}\sqrt{\frac{(k-1)(n-k+1)}{n+1}}\sqrt{1-x^2}\right]=
$$
$$
x\left[Ax+\sqrt{B}\sqrt{1-x^2}\right].
$$
We have to prove that for all $x\in [\frac{1}{n},1]$, $n\geq 3$ and $2\leq k\leq n$ holds the inequality
$$
x\left[Ax+\sqrt{B}\sqrt{1-x^2}\right]<1.
$$
We may assume that $x >0$. This inequality can be arranged to the form
$$
\frac{1}{x}- Ax > \sqrt{B}\sqrt{1-x^2}.
$$
Observe that the left hand side is greater than zero, because $0<A<1$ always hold. (This implies that $\frac{1}{x}- Ax>\frac{1}{x}- x>0$.) Considering the square of the inequality we get the following one:
$$
(A^2+B)x^4-(2A+B)x^2+1>0.
$$
The possible roots of the quadric in the left hand side is
$$
(x^2)_{1,2}=\frac{(2A+B)\pm \sqrt{B(4A+B-4)}}{2(A^2+B)}.
$$
These are real numbers if and only if $4A+B-4=B+4(A-1)\geq 0$. From this inequality we get
$$
0\leq \frac{(n+2)^2}{n^2}\frac{(k-1)(n-k+1)}{n+1}-4\frac{(k-1)(n+2)}{n(n+1)},
$$
equivalently
\begin{equation}\label{ineq:realroots}
4\leq \frac{(n-k+1)(n+2)}{n}.
\end{equation}

If $n=3$ than for $k=2,3$ the above inequality does not hold showing that the equation has no real solutions. This means that $f(x)<1$ and the statement is true.

If $n=4$ than for $k=3,4$ the inequality (\ref{ineq:realroots}) is false, too. However for $k=2$ it is hold, because $4<\frac{18}{4}$. In this case, $A=1-3/10=7/10$, $B=9/4\cdot 3/5=27/20$ and the roots are
$$
x^2_{1,2}=\frac{7/5+27/20\pm \sqrt{27/20(14/5+27/20-4)}}{2(49/100+27/20)}=\frac{11/4\pm 9/20}{92/25}
$$
$$
x_1^2=20/23
\qquad
x_2^2=5/8
$$
If the variable $x=\langle u_0,u\rangle$ lies between $\sqrt{5/8}$ and $\sqrt{20/23}$ then the examined function $f(x)$ can be greater or equal to $1$.

So we have to investigate the original formula in the following situation: Set $n=4$, $k=2$ and assume that $\sqrt{5/8}\leq\langle u_0,u\rangle\leq\sqrt{20/23}$.
Since $s_{2,k}^0=s_{i,2}$ we have the following inequalities between $\langle s_{i_2},u \rangle $ and $\langle u_0, u \rangle$,
$$
\sqrt{\frac{2}{5}}\langle u_0, u \rangle\geq \langle s_{i_2},u \rangle\geq \langle u_0, u \rangle\sqrt{\frac{5}{8}}-\sqrt{1-\langle u_0, u \rangle^2}\sqrt{\frac{3}{8}}
$$

The function
$$
f=\left(\frac{2}{n}\left\langle s_{i_2},u\right\rangle^2-\sqrt{\frac{2}{n(n+1)}}(n+2)\langle u_0, u \rangle\left\langle s_{i_2},u\right\rangle+k\langle u_0,u\rangle^2\right)
$$
is convex for a fixed value of $\langle s_{i_2},u \rangle$ hence it can takes its maximal values at the ends of its domain. Hence we have to determine the values of $f$ using the conditions $\sqrt{\frac{2}{5}}\langle u_0, u \rangle= \langle s_{i_2},u \rangle$ and $\langle s_{i_2},u \rangle= \langle u_0, u \rangle\sqrt{\frac{5}{8}}-\sqrt{1-\langle u_0, u \rangle^2}\sqrt{\frac{3}{8}}$, respectively. In the first case we get
$$
\left(\frac{1}{2}\left\langle s_{i_2},u\right\rangle^2 -3\sqrt{\frac{2}{5}}\langle u_0, u \rangle\left\langle  s_{i_2},u\right\rangle+2\langle u_0,u\rangle^2\right)=\left(\frac{1}{5}-3\frac{2}{5}+2\right)\langle u_0, u \rangle^2\leq 1,
$$
showing that $f(x)\leq 1$ as we stated. Secondly substitute the lower bound function to the expression of $f$. Using again the notation $x=\langle u_0, u \rangle$, we have to maximize the function
$$
g(x):=\frac{5}{8}x^2+\left(\sqrt{\frac{27}{20}}-\sqrt{\frac{15}{64}}\right)x\sqrt{1-x^2}+\frac{3}{16},
$$
on the interval $\sqrt{5/8}\leq x\leq \sqrt{20/23}$. It can be seen easily that on the interval $[1/2,\sqrt{20/23}]$ it is a concave function with an unique maximal value which is approximately $f(x)\approx 0.960977 $ attends at the value $x=0.915944<\sqrt{20/23}$, proving our statement.

Examine now the $5$-dimensional case. The inequality (\ref{ineq:realroots}) does not hold if $k>6-20/7=4-6/7>3$. Thus we have to investigate two respective cases, when $k=2$ or $k=3$. 

Let $k=3$. If we fixed the value of $x:=\langle u_0,u\rangle$ the function 
$$
f=\left(\frac{2}{5}\sum\limits_{l=2}^3\left\langle s_{i_l},u\right\rangle^2-\frac{7\sqrt{15}}{15}\langle u_0, u \rangle\left\langle  \sum\limits_{l=2}^3 s_{i_l},u\right\rangle+3\langle u_0,u\rangle^2\right)=
$$
$$
\left(\frac{2}{5}\left\langle s_{i_2},u\right\rangle^2-\frac{7\sqrt{15}}{15}\langle u_0, u \rangle\left\langle s_{i_2},u\right\rangle+\frac{3}{2}\langle u_0,u\rangle^2\right)+
$$
$$
\left(\frac{2}{5}\left\langle s_{i_3},u\right\rangle^2-\frac{7\sqrt{15}}{15}\langle u_0, u \rangle\left\langle s_{i_3},u\right\rangle+\frac{3}{2}\langle u_0,u\rangle^2\right)
$$
is the sum of two convex functions defined on the same interval. The maximal value of the two terms separately can be achieved only at the ends of the interval. We have an upper bound for $f$ if we determine the maximal value of the two terms separately and we add them. The left end of the examined interval gave with the equality $\left\langle s_{i_2},u\right\rangle=\langle u_0,u\rangle\sqrt{\frac{12}{5}}-\sqrt{1-\langle u_0,u\rangle^2}\sqrt{\frac{3}{5}}$ while the right end with the other one $\left\langle s_{i_2},u\right\rangle=\sqrt{\frac{5}{12}}\langle u_0,u\rangle$.
The sum of the two terms is less or equal to
$$
2\max\left\{\left(\frac{2}{5}\left(\langle u_0,u\rangle\sqrt{\frac{12}{5}}-\sqrt{1-\langle u_0,u\rangle^2}\sqrt{\frac{3}{5}}\right)^2-\right.\right.
$$
$$
\left. \left.\frac{7\sqrt{15}}{15}\langle u_0, u \rangle\left(\langle u_0,u\rangle\sqrt{\frac{12}{5}}-\sqrt{1-\langle u_0,u\rangle^2}\sqrt{\frac{3}{5}}\right)+\frac{3}{2}\langle u_0,u\rangle^2\right), \left(\frac{1}{6}-\frac{7}{6}+\frac{3}{2}\right)\langle u_0, u \rangle^2
\right\}=
$$
$$
2\max\left\{-\frac{29}{50}\langle u_0,u\rangle^2+\frac{11}{25}\langle u_0,u\rangle\sqrt{1-\langle u_0,u\rangle^2}+\frac{6}{25}, \frac{1}{2}\langle u_0, u \rangle^2
\right\}.
$$
The concave function
$$
h(x):=-\frac{29}{50}x^2+\frac{11}{25}x\sqrt{1-x^2}+\frac{6}{25}
$$
attends its maximal value $0.314005$ on the interval $[0,1]$ at the point $x=0.318833$ showing that $f(x)<1$ in this case, too. \footnote{In this case we did not have to use the smaller domain, based on the sharper calculation of the values $A=8/15$, $B=196/75$ and $(x^2)_{1,2}=\frac{9(23\pm 7\sqrt{2})}{326}$.}

The last case\footnote{In this case $A=23/30$, $B=98/75$ and $(x^2)_{1,2}=\frac{23/15+98/75\pm\sqrt{98/75(46/15+98/75-4)}}{2(23^2/30^2+98/75)}$.} is when $n=5$ and $k=2$.
The examined function $f$ is
$$
\frac{2}{5}\left\langle s_{i_2},u\right\rangle^2 -\frac{7\sqrt{15}}{15}\langle u_0, u \rangle\left\langle  s_{i,2},u\right\rangle+2\langle u_0,u\rangle^2
$$
and the conditions on the two variables are
$$
\langle u_0, u \rangle\sqrt{\frac{3}{5}}-\sqrt{1-\langle u_0, u \rangle^2}\sqrt{\frac{2}{5}}\leq \left\langle  s_{i,2},u\right\rangle\leq \sqrt{\frac{5}{12}}\langle u_0, u \rangle.
$$
Hence we get again that
$$
f(x)\leq \max\left\{\frac{17}{25}\langle u_0, u \rangle^2+\frac{23\sqrt{6}}{75}\langle u_0, u \rangle\sqrt{1-\langle u_0, u \rangle^2}+\frac{4}{25},\langle u_0,u\rangle^2\right\}.
$$
Consider the first argument of the right hand side and assume that $\left\langle  s_{i,2},u\right\rangle=\sin\varphi$ with a new variable $\varphi$. Then from the equality $ \sin \varphi:=\langle u_0, u \rangle\sqrt{\frac{3}{5}}-\sqrt{1-\langle u_0, u \rangle^2}\sqrt{\frac{2}{5}}$ we get that
$$
\langle u_0, u \rangle=\sqrt{\frac{3}{5}}\sin \varphi +\sqrt{\frac{2}{5}}\cos \varphi.
$$
The examined function now is
$$
g(\varphi)=\frac{17}{25}\left(\frac{3}{5}\sin ^2\varphi +\frac{2}{5}\cos^2 \varphi+ 2\frac{\sqrt{6}}{5}\sin\varphi\cos\varphi\right)+
$$
$$
\frac{23\sqrt{6}}{75}\left(\sqrt{\frac{3}{5}}\sin \varphi +\sqrt{\frac{2}{5}}\cos \varphi\right)\left(\sqrt{\frac{3}{5}}\cos \varphi -\sqrt{\frac{2}{5}}\sin \varphi\right)+\frac{4}{25}=
$$
$$
\left(\frac{51}{125}-6\frac{23}{5^33}\right)\sin ^2\varphi +\left(\frac{34}{125}+6\frac{23}{5^33}\right)\cos^2 \varphi+ \left(2\frac{17\sqrt{6}}{125}+\frac{23\sqrt{6}}{5^33}\right)\sin\varphi\cos\varphi+\frac{4}{25}=
$$
$$
\frac{1}{5}\sin^2\varphi +\frac{4}{5}\cos^2\varphi+\frac{\sqrt{6}}{3}\sin\varphi\cos\varphi=
\frac{3}{5}\cos^2\varphi+\frac{\sqrt{6}}{3}\sin\varphi\cos\varphi+\frac{1}{5}.
$$
This function takes its maximal value at that point $\varphi_{\max}$ for which $\cos^2\varphi_{\max}-\sin^2\varphi_{\max}=\sqrt{\frac{27}{77}}$ and $\sin\varphi_{\max}\cos\varphi_{\max}=\frac{1}{2}\sqrt{\frac{50}{77}}$, from which $\cos^2\varphi_{\max}=\frac{1}{2}\left(1+\sqrt{\frac{27}{77}}\right)$ and $\sin^2\varphi_{\max}=\frac{1}{2}\left(1-\sqrt{\frac{27}{77}}\right)$. The maximal value is
$$
g(\varphi_{\max})=\frac{1}{2}+\frac{3}{10}\sqrt{\frac{27}{77}}+\frac{10\sqrt{3}}{6\sqrt{77}}=\frac{1}{2}+\frac{\sqrt{77}}{10\sqrt{3}}\approx 1.006623.
$$
Since the corresponding values $\langle u_0, u \rangle$ and $\left\langle  s_{i,2},u\right\rangle$ are allowed in our investigation this proves that the getting upper bound can be achieved, too. This proves the statement.
\end{proof}

\begin{remark}
The optimal position of the regular simplex can be written geometrically in the five-dimensional case, too. We start with that position of the simplex, when the edge joining the vertices $s_0$ and $s_1$ lies in $H$ and the affine hull $G$ of the remaining vertices is parallel to $H$. The maximal volume can be get if we rotate this simplex around the orthogonal direct component $s_1^\bot$ of the line of $s_1$ with respect to $H$ by the angle $\varphi_{\max} $.
Before the rotation we have that
$$
s_0=\frac{1}{\sqrt{2}}\left(
\begin{array}{c}
1\\
0\\
0\\
0\\
0\\
0
\end{array}\right),
s_1=\frac{1}{\sqrt{2}}\left(
\begin{array}{c}
-1\\
1\\
0\\
0\\
0\\
0
\end{array}\right),
s_2=
\frac{1}{\sqrt{2}}\left(
\begin{array}{c}
-1\\
0\\
1\\
0\\
0\\
0
\end{array}\right),
s_3=
\frac{1}{\sqrt{2}}\left(
\begin{array}{c}
-1\\
0\\
0\\
1\\
0\\
0
\end{array}\right),
$$
$$
s_4=
\frac{1}{\sqrt{2}}\left(
\begin{array}{c}
-1\\
0\\
0\\
0\\
1\\
0
\end{array}\right),
s_5=
\frac{1}{\sqrt{2}}\left(
\begin{array}{c}
-1\\
0\\
0\\
0\\
0\\
1
\end{array}\right),
u=\frac{1}{2\sqrt{3}}\left(
\begin{array}{c}
-2\\
-2 \\
1\\
1\\
1\\
1
\end{array}\right),
u_0=
\frac{1}{\sqrt{30}}\left(
\begin{array}{c}
-5\\
1\\
1\\
1\\
1\\
1
\end{array}\right).
$$
Since $s_1$ is in that $2$-plane which is spanned by $u$ and $u_0$ we have the equality $u_0=\frac{\sqrt{15}}{5}s_1+\frac{\sqrt{10}
}{5}u$. Our rotation restricted to the $4$-subspace $\langle \{s_1^\bot, \sum_{i=1}^6e_i\}\rangle $ is the identity and on the $2$-subspace is generated by $s_1$ and $u$ it acts as a standard rotation. We consider the new orthonormal basis $\{\frac{1}{\sqrt{6}}\sum_{i=1}^6e_i,f_1,f_2,f_3,s_1,u\}$ where $\{f_1,f_2,f_3\}$ is an orthonormal basis of $s_1^\bot$. Clearly, $\langle\{s_5-s_2,s_5-s_3,s_5-s_4\}\rangle=\langle\{f_1,f_2,f_3\}\rangle$ hence we can choose $f_1$ to $s_5-s_2$, $f_2$ to $(s_5-s_3)-(s_5-s_4)=s_4-s_3$ and $f_3$ to $\frac{1}{\sqrt{2}}\left((-(s_5-s_2)+(s_5-s_3)+(s_5-s_4)\right)$, respectively. The orthogonal matrix of the basis change is
$$
B=\left(
\begin{array}{cccccc}
\frac{1}{\sqrt{6}}& 0 & 0 & 0 & -\frac{1}{\sqrt{2}} & -\frac{1}{\sqrt{3}}\\
\frac{1}{\sqrt{6}}& 0 & 0 & 0 & \frac{1}{\sqrt{2}} & -\frac{1}{\sqrt{3}}\\
\frac{1}{\sqrt{6}}& -\frac{1}{\sqrt{2}} & 0 & \frac{1}{2} & 0 & \frac{1}{2\sqrt{3}}\\
\frac{1}{\sqrt{6}}& 0 & -\frac{1}{\sqrt{2}} & -\frac{1}{2} &0 & \frac{1}{2\sqrt{3}}\\
\frac{1}{\sqrt{6}}& 0 & \frac{1}{\sqrt{2}} & -\frac{1}{2} & 0 & \frac{1}{2\sqrt{3}}\\
\frac{1}{\sqrt{6}}& \frac{1}{\sqrt{2}} & 0 & \frac{1}{2} & 0 & \frac{1}{2\sqrt{3}}
\end{array}\right).
$$
Since $B^{-1}=B^T$ we can get easily the new coordinates of the vertices. The new coordinates of the vertex  $\frac{1}{\sqrt{2}}e_i$ are the elements of the $i$-th row of $B$ multiply by $1/\sqrt{2}$, respectively. The translation of the origin to the first vertex $\frac{1}{\sqrt{2}}e_1$ of the simplex is equivalent to the subtraction of the first column of $\frac{1}{\sqrt{2}}B^T$ from the column vector of it, thus the new coordinates of the examined vectors are
$$
s_0=\left(
\begin{array}{c}
0\\
0\\
0\\
0\\
0\\
0
\end{array}\right),
s_1=\left(
\begin{array}{c}
0\\
0\\
0\\
0\\
1\\
0
\end{array}\right),
s_2=
\left(
\begin{array}{c}
0\\
-\frac{1}{2}\\
0\\
\frac{1}{\sqrt{8}}\\
\frac{1}{2}\\
\sqrt{\frac{3}{8}}
\end{array}\right),
s_3=
\left(
\begin{array}{c}
0\\
0\\
-\frac{1}{2}\\
-\frac{1}{\sqrt{8}}\\
\frac{1}{2}\\
\sqrt{\frac{3}{8}}
\end{array}\right),
$$
$$
s_4=
\left(
\begin{array}{c}
0\\
0\\
\frac{1}{2}\\
-\frac{1}{\sqrt{8}}\\
\frac{1}{2}\\
\sqrt{\frac{3}{8}}
\end{array}\right),
s_5=
\left(
\begin{array}{c}
0\\
\frac{1}{2}\\
0\\
\frac{1}{\sqrt{8}}\\
\frac{1}{2}\\
\sqrt{\frac{3}{8}}
\end{array}\right),
u_0=\left(
\begin{array}{c}
0\\
0\\
0\\
0\\
\frac{\sqrt{15}}{5}\\
\frac{\sqrt{10}}{5}
\end{array}\right),
u=
\left(
\begin{array}{c}
0\\
0\\
0\\
0\\
0\\
1
\end{array}\right).
$$
The matrix of the rotation and the respective rotated vertices are
$$
\left(
  \begin{array}{cccccc}
    1 & 0 & 0 & 0 & 0 & 0 \\
    0 & 1 & 0 & 0 & 0 & 0 \\
    0 & 0 & 1 & 0 & 0 & 0 \\
    0 & 0 & 0 & 1 & 0 & 0 \\
    0 & 0 & 0 & 0 & \cos\varphi & -\sin\varphi \\
    0 & 0 & 0 & 0 & \sin\varphi & \cos\varphi \\
  \end{array}
\right),
$$
$$
s_0(\varphi)=\left(
  \begin{array}{c}
    0  \\
    0  \\
    0  \\
    0  \\
    0  \\
    0  \\
  \end{array}
\right),
s_1(\varphi)=\left(
  \begin{array}{c}
    0  \\
    0  \\
    0  \\
    0  \\
    \cos\varphi  \\
    \sin\varphi  \\
  \end{array}
\right),
s_2(\varphi)=\left(
  \begin{array}{c}
   0\\
-\frac{1}{2}\\
0\\
\frac{1}{\sqrt{8}}\\
\frac{1}{2}\cos\varphi-\sqrt{\frac{3}{8}}\sin\varphi\\
\frac{1}{2}\sin\varphi+\sqrt{\frac{3}{8}}\cos\varphi
  \end{array}
\right),
$$
$$
s_3(\varphi)=\left(
\begin{array}{c}
0\\
0\\
-\frac{1}{2}\\
-\frac{1}{\sqrt{8}}\\
\frac{1}{2}\cos\varphi-\sqrt{\frac{3}{8}}\sin\varphi\\
\frac{1}{2}\sin\varphi+\sqrt{\frac{3}{8}}\cos\varphi
  \end{array}
\right),
s_4(\varphi)=
\left(
\begin{array}{c}
0\\
0\\
\frac{1}{2}\\
-\frac{1}{\sqrt{8}}\\
\frac{1}{2}\cos\varphi-\sqrt{\frac{3}{8}}\sin\varphi\\
\frac{1}{2}\sin\varphi+\sqrt{\frac{3}{8}}\cos\varphi
  \end{array}
\right),
$$
$$
s_5(\varphi)=
\left(
\begin{array}{c}
0\\
\frac{1}{2}\\
0\\
\frac{1}{\sqrt{8}}\\
\frac{1}{2}\cos\varphi-\sqrt{\frac{3}{8}}\sin\varphi\\
\frac{1}{2}\sin\varphi+\sqrt{\frac{3}{8}}\cos\varphi
  \end{array}
\right).
$$
Since we also have
$$
u_0(\varphi)=\left(
\begin{array}{c}
0\\
0\\
0\\
0\\
\frac{\sqrt{15}}{5}\cos\varphi-\frac{\sqrt{10}}{5}\sin\varphi\\
\frac{\sqrt{15}}{5}\sin\varphi+\frac{\sqrt{10}}{5}\cos\varphi
\end{array}\right) \qquad \mbox{ and }
u=\left(
\begin{array}{c}
0\\
0\\
0\\
0\\
0\\
1\\
\end{array}\right),
$$
we get the calculation of the previous paragraph showing that the maximal volume attends at $\cos \varphi_{\max}=\sqrt{\frac{1}{2}\left(1+\sqrt{\frac{27}{77}}\right)}$. Using the notation where $a:=\left(1+\sqrt{\frac{27}{77}}\right)$ and $b:=\left(1-\sqrt{\frac{27}{77}}\right)$, we get that the eleven vertices of the optimal polyhedron are
$$
\left(
  \begin{array}{c}
    0  \\
    0  \\
    0  \\
    0  \\
    0  \\
    0  \\
  \end{array}
\right),
\left(
  \begin{array}{c}
    0  \\
    0  \\
    0  \\
    0  \\
    \sqrt{\frac{1}{2}a}  \\
    \pm \sqrt{\frac{1}{2}b}  \\
  \end{array}
\right),
\left(
  \begin{array}{c}
   0\\
-\frac{1}{2}\\
0\\
\frac{1}{\sqrt{8}}\\
\sqrt{\frac{1}{8}a}-\sqrt{\frac{3}{16}b}\\
\pm\left(\sqrt{\frac{3}{16}a}+\sqrt{\frac{1}{8}b}\right)
  \end{array}
\right),
\left(
\begin{array}{c}
0\\
0\\
-\frac{1}{2}\\
-\frac{1}{\sqrt{8}}\\
\sqrt{\frac{1}{8}a}-\sqrt{\frac{3}{16}b}\\
\pm\left(\sqrt{\frac{3}{16}a}+\sqrt{\frac{1}{8}b}\right)
  \end{array}
\right),
$$
$$
\left(
\begin{array}{c}
0\\
0\\
\frac{1}{2}\\
-\frac{1}{\sqrt{8}}\\
\sqrt{\frac{1}{8}a}-\sqrt{\frac{3}{16}b}\\
\pm\left(\sqrt{\frac{3}{16}a}+\sqrt{\frac{1}{8}b}\right)
  \end{array}
\right),
\left(
\begin{array}{c}
0\\
\frac{1}{2}\\
0\\
\frac{1}{\sqrt{8}}\\
\sqrt{\frac{1}{8}a}-\sqrt{\frac{3}{16}b}\\
\pm\left(\sqrt{\frac{3}{16}a}+\sqrt{\frac{1}{8}b}\right)
  \end{array}
\right),
$$
respectively. The normals of the upper facets are
$$
u_0(\varphi_{\max})=\left(
\begin{array}{c}
0\\
0\\
0\\
0\\
\sqrt{\frac{3}{10}a}-\sqrt{\frac{2}{10}b}\\
\sqrt{\frac{2}{10}a}+\sqrt{\frac{3}{10}b}
  \end{array}
\right) \, \mbox{ and } \,
u_1(\varphi_{\max})=\left(
\begin{array}{c}
0\\
0\\
0\\
0\\
-\sqrt{\frac{3}{10}a}-\sqrt{\frac{2}{10}b}\\
\sqrt{\frac{2}{10}a}-\sqrt{\frac{3}{10}b}
  \end{array}
\right).
$$
We can imagine the corresponding body as the disjoint union of two pyramids and a simplex. The common base of the pyramids is a four-dimensional prism defined by the convex hull of the vertices $s_2(\varphi_{\max}),s_3(\varphi_{\max}),s_4(\varphi_{\max}),s_5(\varphi_{\max})$ and its reflected images $s'_2(\varphi_{\max}),s'_3(\varphi_{\max}),s'_4(\varphi_{\max}),s'_5(\varphi_{\max})$ with 4-dimensional volume
$$
\left(\frac{1}{6}\cdot 1\cdot 1\cdot 1\cdot \frac{1}{\sqrt{2}}\right)2\left(\sqrt{\frac{3}{16}a}+\sqrt{\frac{1}{8}b}\right)=\frac{1}{3}\left(\sqrt{\frac{3}{32}a}+\sqrt{\frac{1}{16}b}\right).
$$
 The apex of the first pyramid is $s_0$, while of the second one is $s_1(\varphi_{\max})$. From this we get that the volume of the union is
$$
\frac{1}{5}\sqrt{\frac{1}{2}a}\cdot\frac{1}{3}\left(\sqrt{\frac{3}{32}a}+\sqrt{\frac{1}{16}b}\right)=\frac{1}{3\cdot 5}\left(\sqrt{\frac{3}{64}}a+\sqrt{\frac{1}{32}ab}\right)=
$$
$$
\frac{1}{3\cdot 5}\left(\frac{\sqrt{3}}{8}\left(1+\sqrt{\frac{27}{77}}\right)+\sqrt{\frac{1}{32}\frac{50}{77}}\right)=\frac{1}{5!}\left(\sqrt{3}+\frac{19}{\sqrt{77}}\right).
$$
The last part is that simplex which defined by the vertices $\pm s_1(\varphi_{\max})$, $s'_2(\varphi_{\max})$, $s'_3(\varphi_{\max})$, $s'_4(\varphi_{\max})$, $s'_5(\varphi_{\max})$. Its volume is $\frac{1}{4\cdot 5!}\left(\sqrt{3}+\frac{1}{\sqrt{77}}\right)$ giving the volume of the body:
$$
v=\frac{1}{4\cdot 5!}\left(5\sqrt{3}+\sqrt{77}\right).
$$
Since the volume of the regular simplex of dimension $5$ with edge length $1$ is $\frac{\sqrt{3}}{4\cdot 5!}$ we get immediately again the ratio $10\left(\frac{1}{2}+\frac{\sqrt{77}}{10\sqrt{3}}\right)$ of the previous paragraph.

\end{remark}

\section{Acknowledgement}

I am thankful to my colleague Zsolt L\'angi who found the mistake in the proof of Theorem 3 of paper \cite{gho 1} and inspired me to write the present ones. I also thanks for the helpful suggestions of Hans Havlicek.

\end{document}